\DeclarePairedDelimiter\paren{\lparen}{\rparen}
\DeclarePairedDelimiterX{\inpr}[2]{\langle}{\rangle}{{#1},{#2}}
\DeclarePairedDelimiterX{\setI}[2]{\{}{\}}{\,{#1}\ \delimsize| \ {#2}\,}
\newcommand{\RR}{\mathbb{R}}
\newcommand{\CC}{\mathbb{C}}
\newcommand{\x}[1]{x^{\paren*{#1}}}
\newcommand{\re}{\mathrm{e}}
\newcommand{\ri}{\mathrm{i}}
\newcommand{\Order}[1]{\mathrm{O} \paren*{#1}}
\theoremstyle{plain}
\newtheorem{thm}{Theorem}
\newtheorem{prop}[thm]{Proposition}
\theoremstyle{definition}
\newtheorem{dfn}[thm]{Definition}
\theoremstyle{remark}
\newtheorem*{rem}{Remark}
\begin{document}

\author[K.~Ushiyama]{Kansei Ushiyama}
\address[K.~Ushiyama]{Graduate School of Information Science and Technology, the University of Tokyo, 7-3-1, Hongo, Bunkyo-ku, Tokyo 113-8656, Japan}
\email{ushiyama-kansei074@g.ecc.u-tokyo.ac.jp}

\author[S.~Sato]{Shun Sato}
\address[S.~Sato]{Graduate School of Information Science and Technology, the University of Tokyo, 7-3-1, Hongo, Bunkyo-ku, Tokyo 113-8656, Japan}

\author[T.~Matsuo]{Takayasu Matsuo}
\address[T.~Matsuo]{Graduate School of Information Science and Technology, the University of Tokyo, 7-3-1, Hongo, Bunkyo-ku, Tokyo 113-8656, Japan}

\keywords{optimization, dynamical systems, ordinary differential equations, numerical analysis}
%
%
\title[Essential convergence rate of ODEs in optimization]{Essential convergence rate of ordinary differential equations appearing in optimization}
%
%
\begin{abstract}
Some continuous optimization methods can be connected to ordinary differential equations (ODEs) by taking continuous limits, and their convergence rates can be explained by the ODEs. However, since such ODEs can achieve any convergence rate by time scaling, the correspondence is not as straightforward as usually expected, and deriving new methods through ODEs is not quite direct. In this letter, we pay attention to stability restriction in discretizing ODEs and
show that acceleration by time scaling basically implies deceleration in discretization; they balance out so that we can define an attainable unique convergence rate which we call ``essential convergence rate''.
\end{abstract}
\maketitle
\section{Introduction}
Continuous optimization and ordinary differential equations (ODEs) are closely related.
For example, given an unconstrained optimization problem 
\begin{equation}
       \min_{x \in \RR^d} f(x),
\end{equation}
the steepest descent method, the simplest method for it,
\begin{equation}
       \x{k+1} = \x{k} - h_{k+1}\nabla f(\x{k}),
\end{equation}
can be interpreted as the explicit Euler method for the gradient flow $\dot{x} = -\nabla f(x)$.
Here, $h_{k+1}$ can be read as the time step size in the context of numerical methods.
The convergence rate of the steepest descent method for convex and $L$-smooth functions is $f(\x{k}) - f^\star = \Order{1/k}$ ($f^\star := \min_{x\in \RR^d} f(x)$), which corresponds to the rate of the gradient flow for convex functions $f(x(t)) - f^\star = \Order{1/t}$. 

This point of view has been becoming more important, especially after the derivation of the second-order ODE
\begin{equation}
       \ddot{x} + \frac{3}{t} \dot{x} + \nabla f(x) = 0 \label{suODE}
\end{equation}
as a continuous limit of Nesterov's accelerated gradient descent method (NAG) by Su--Boyd--Cand\`{e}s~\cite{Su}.
NAG achieves the optimal convergence rate $\Order{1/k^2}$ for $L$-smooth convex functions.
Since the ODE~\eqref{suODE} also achieves the rate $\Order{1/t^2}$, the discrete and continuous convergence rates nicely matches.
Similarly, for various known optimization methods, their continuous limit ODEs have been derived and it has been shown that the convergence rates are consistent~\cite{Wilson}.

Interpreting optimization methods as a pair of continuous dynamical systems and their discretizations (numerical methods) provides useful insights.
For example, the agreement between the discrete and continuous convergence rates suggests that the essence of the convergence mechanism lies in the underlying dynamical systems where our intuition works, and it also suggests that numerical methods that bridge continuous and discrete can preserve the rate.
Another advantage of this view is that we can prove the convergence rate in the following two steps: analyzing the convergence rate of the ODE, and evaluating its discretization.
This in turn is expected to help us construct new optimization methods.

However, this correspondence is not quite complete in the following sense.
For discrete optimization methods, the lower bound of the convergence rate is known for various objective function classes (cf.~\cite{Nesterov}).
On the other hand, we cannot naively consider the lower bound of the convergence rate for ODEs, because the rate can be arbitrarily changed by nonlinear time rescaling as pointed out in~\cite{Wibisono}.
Moreover, as even the simple gradient flow can achieve arbitrary rates, it is difficult to determine which ODE is best for the optimization method.

In this letter, in order to fill the missing gap we show that the indefiniteness caused by time rescaling can be eliminated by defining {\em essential convergence rate} in continuous systems with the help of the concept of stability in numerical analysis.

\section{Preliminary: stability}
When implementing an optimization method, we have to choose step sizes small enough to avoid overflow.
Once we regard optimization methods as numerical methods for ODEs, we can see that possible step size is determined by the stability of the numerical method through linear stability analysis.
\begin{dfn}\label{stability}
       (cf.~\cite{Hairer}) Let $R(h\lambda)$ be the value obtained by applying the numerical method to Dahlquist's test equation $\dot{y} = \lambda y, y(0) = 1$. $R$ is called the stability function and the set $\{z \in \CC \mid |R(z)| \le 1\}$ is called the {\em stability domain}.
\end{dfn}
\begin{rem}
       Although Definition~\ref{stability} gives the stability condition of the numerical solution only for the linear scalar ODE, the stability for the multi-dimensional nonlinear ODE $\dot{y} = g(y)$ can be similarly handled by identifying $\lambda$ as each eigenvalue of the Jacobian of $g$. 
\end{rem}

The stability domains of explicit numerical methods are basically bounded. For example, the stability function of any explicit Runge--Kutta method is polynomial, and therefore the stability domain should be bounded.
In the following sections, since computationally expensive methods are not suitable for general optimization methods, explicit methods are assumed to be used.

\section{Essential convergence rate}\label{sec3}
In this section, we consider the essential convergence rate.
In order to allow high-order ODEs such as~\eqref{suODE}, we consider the following $d'$-dimensional ($d' \ge d$) first-order non-autonomous system $\dot{y} = g(y,t)$,
where $y_i = x_i \, (i = 1,\dots,d)$, and if $d' > d$ the remaining $y_{d+1},\dots,y_{d'}$ are necessary auxiliary variables. Accordingly we extend the objective function $f$ and the optimal solution to $d'$-dimension by $\tilde{f} (y) = f(y_1,\dots,y_d)$. 
We define $\mathcal{G}$ as the collection of the differentiable vector fields $g : \RR^{d'} \times \RR_{\ge 0} \to \RR^{d'}$ such that 
$\lim_{t \to \infty} \tilde{f}(y(t)) = f^\star$ from any initial point.
Below we abuse the notation and simply denote the objective function by $f$.

We consider time rescaling for the solution $y(t)$ of the ODE $\dot{y} = g(y,t)$.
Time rescaling is change of variables $t = \alpha(\tau)$ where $\alpha$ is a monotonically increasing differentiable function $\alpha : \RR_{\ge 0} \to \RR_{\ge 0}$ with $\alpha(0) = 0, \lim_{t\to\infty}\alpha(t)=\infty$.
By applying time rescaling $t=\alpha(\tau)$ to $y(t)$, we obtain a time rescaled solution $\tilde{y}(\tau) := y(\alpha(\tau))$. 
Then $\tilde{y}$ follows the new ODE 
\begin{equation}
       \dv{\tilde{y}}{\tau} = \dv{\tau}y(\alpha(\tau)) = \dot{\alpha}(\tau)g(\tilde{y}(\tau),\alpha(\tau)). \label{ode4:trans}
\end{equation}
In this way we obtain two different ODEs that share the same trajectory.

\begin{dfn}\label{dfnsim}
       For $g_1,g_2 \in \mathcal{G}$, we consider two ODEs $\dot{y}_1 = g_1(y_1,t)$ and $\dot{y}_2 = g_2(y_2,t)$. If there exists a time rescaling function $\alpha$ such that the solutions $y_1$ and $y_2$ of these ODEs w.r.t. an arbitrary same initial point satisfy $y_2(t) = y_1(\alpha(t))$, we write $g_1 \sim g_2$.
       The symbol $\sim$ defines an equivalence relation in $\mathcal{G}$, and we denote the equivalence class for $g \in \mathcal{G}$ by $[g]$.
\end{dfn}
In the above situation, $g_1$ and $g_2$ satisfy $g_2(y_2(t),t) = \dot{\alpha}(t)g_1(y_1(\alpha(t)),\alpha(t)) = \dot{\alpha}(t)g_1(y_2(t),\alpha(t))$.

Now let us consider applying numerical methods to these ODEs.
As said before, we only consider numerical methods with bounded stability domains. This implies that in~\eqref{ode4:trans} all the eigenvalues of $h_k\dot{\alpha}(t)(\partial g/\partial y)$  should stay in the domains for every $\alpha(t)$ chosen; thus if, for example, $\dot{\alpha}(t)\rho(\partial g/\partial y)\to\infty$ as $t\to\infty$ ($\rho$ is the spectral radius), we are forced to take decreasing time step widths $h_k\to 0$, and the overall efficiency may not improve. We also notice that among various $\alpha(t)$'s if one realizes $\dot{\alpha}(t)\rho(\partial g/\partial y)=\Theta(1)$, that would be a good choice since it should allow a simple fixed time-stepping implementation. Under these observations, we introduce the following definition.
\begin{dfn}\label{dfnrate}
       For $[g]$, $g_0 \in [g]$ is said to be a {\em proper representative of} $[g]$ if it satisfies $\rho\left({\partial g_0((y,t))}/{\partial y} \right) = \Theta(1)$. When there is a proper representative in $[g]$, the {\em essential convergence rate of} $[g]$ is defined by $\beta(t)$ such that $f(y(t)) - f^\star = \Theta(\beta(t))$ holds for any solution $y$ of the ODE corresponding to the proper representative.
\end{dfn}

The concept of essential convergence rate might not seem well-defined when there are multiple proper representatives in an equivalence class. The next proposition reveals, however, it is actually valid; i.e., the corresponding rates coincide up to linear rescalings of time.
\begin{prop}
       Let $g\in \mathcal{G}$ be given. If $\beta_1, \beta_2$ are essential convergence rates of $[g]$, there exist $C_1, C_2 \in \RR_{>0}$ such that $\beta_2(t) = \Order{\beta_1(C_1t)}$ and $\beta_1(t) = \Order{\beta_2(C_2t)}$ holds.
\end{prop}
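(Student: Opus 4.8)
The plan is to show that any two proper representatives of $[g]$ are related by a time rescaling $\alpha$ that is \emph{asymptotically linear}, i.e.\ $\alpha(t) = \Theta(t)$, and then transport this fact to the rate functions. First I would fix proper representatives $g_1, g_2 \in [g]$ giving rise to $\beta_1, \beta_2$ respectively. Since $g_1 \sim g_2$, there is a time rescaling $\alpha$ with $y_2(t) = y_1(\alpha(t))$ for solutions from every common initial point, and the relation recorded just after Definition~\ref{dfnsim} gives $g_2(y_2(t),t) = \dot\alpha(t)\, g_1(y_2(t), \alpha(t))$. Because this holds along the trajectory through every initial point, it upgrades to the vector-field identity $g_2(y,t) = \dot\alpha(t)\, g_1(y, \alpha(t))$ for all $y$.

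Next I would differentiate this identity in $y$, obtaining $\partial g_2/\partial y\,(y,t) = \dot\alpha(t)\, \partial g_1/\partial y\,(y,\alpha(t))$, so that (using $\dot\alpha(t) > 0$) the spectral radii satisfy $\rho(\partial g_2/\partial y\,(y,t)) = \dot\alpha(t)\,\rho(\partial g_1/\partial y\,(y,\alpha(t)))$. Since $g_1$ and $g_2$ are both proper representatives, the left-hand side is $\Theta(1)$ and the factor $\rho(\partial g_1/\partial y\,(y,\alpha(t)))$ is $\Theta(1)$ as well (as $\alpha(t)\to\infty$). This forces $\dot\alpha(t) = \Theta(1)$; integrating then yields constants $0 < c_1 \le c_2$ with $c_1 t \le \alpha(t) \le c_2 t$ for large $t$, i.e.\ $\alpha(t) = \Theta(t)$, and the same holds for the inverse $\alpha^{-1}$.

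Finally I would transfer this to the rates. From $y_2(t) = y_1(\alpha(t))$ we get $f(y_2(t)) - f^\star = f(y_1(\alpha(t))) - f^\star$, hence $\beta_2(t) = \Theta(\beta_1(\alpha(t)))$. Taking the rate functions to be (eventually) monotonically decreasing---which we may assume since they describe a quantity tending to $0$---the bound $c_1 t \le \alpha(t)$ gives $\beta_1(\alpha(t)) \le \beta_1(c_1 t)$, so $\beta_2(t) = \Order{\beta_1(C_1 t)}$ with $C_1 = c_1$. Applying the symmetric argument to $\alpha^{-1}$ (which is likewise $\Theta(t)$) yields $\beta_1(t) = \Order{\beta_2(C_2 t)}$.

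The main obstacle I anticipate is twofold: promoting the trajectory-wise relation to the pointwise Jacobian identity (which needs that trajectories cover the relevant part of phase space), and the accompanying monotonicity requirement on $\beta$. The latter is essential, and it is also why the constant $C_1$ must sit \emph{inside} the argument of $\beta_1$: rates such as $\mathrm{e}^{-t}$ are not preserved under rescaling of the output, so the argument genuinely relies on the sharp conclusion $\dot\alpha = \Theta(1)$ rather than on any weaker control of $\alpha$.
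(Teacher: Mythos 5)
Your proposal is correct and follows essentially the same route as the paper's own proof: deduce $\dot{\alpha}(t) = \Theta(1)$ from the fact that both representatives are proper, integrate to get the linear bound $C_1 t \le \alpha(t)$, and transfer the estimate to the rates via $f(y_2(t)) - f^\star = f(y_1(\alpha(t))) - f^\star$. In fact you spell out two steps the paper leaves implicit---the Jacobian/spectral-radius identity $\rho(\partial g_2/\partial y\,(y,t)) = \dot{\alpha}(t)\,\rho(\partial g_1/\partial y\,(y,\alpha(t)))$ behind the claim $\dot{\alpha}(t)=\Theta(1)$, and the (eventual) monotonicity of the rate functions needed to pass from $C_1 t \le \alpha(t)$ to $\beta_1(\alpha(t)) \le \beta_1(C_1 t)$---so no gap.
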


\begin{proof}
       Let $g_1,g_2 \in [g]$ be proper representatives. Then there exists a time rescaling function $\alpha(t)$ such that for any solution $g_2(y,t) = ({{\rm d}\alpha(t)}/{{\rm d}t}) g_1(y,\alpha(t))$ holds. From the definition of proper representative, we see $\dot{\alpha}(t)=\Theta(1)$.
       This implies for sufficiently large $t$ there exists a constant $C_1 \in \RR_{>0}$ and
       \begin{equation}
         C_1 t \le \alpha(t) \label{a}
       \end{equation}
       holds.
       Let $\beta_1(t)$ (respectively, $\beta_2(t)$) be the convergence rate derived from ${{\rm d}y_1}/{{\rm d}t} = g_1(y_1,t)$ (resp. ${{\rm d}y_2}/{{\rm d}t} = g_2(y_2,t)$). It follows from~\eqref{a} and
       \begin{align}
         \Theta( \beta_2(t) ) &= f(y_2(t)) - f^\star \\
         &= f(y_1(\alpha(t))) - f^\star = \Theta( \beta_1(\alpha(t)) )
       \end{align}
       that $\beta_2(t) = \Order{\beta_1(C_1 t)}$. Similarly, $\beta_1(t) = \Order{\beta_2(C_2 t)}$ holds.
\end{proof}

Now we show that with time rescalings the convergence rates intrinsically cannot exceed the essential one in Definition~\ref{dfnrate}, if we take discretization into account as well. 

We start by clarifying our setting for the theorem. 
Let us suppose we are given $g\in\mathcal{G}$, and there is a proper representative $g_0$ in $[g]$. Below we only consider the time rescaling $\alpha$ from $g_0$ with monotonic $\dot{\alpha}$, i.e., the rescaling of the form $\tilde{g}(y,t) = \dot{\alpha}(t)g_0(y,\alpha(t))$, and consider the behavior of this $\alpha(t)$. 
The assumption on $\dot{\alpha}$ is rather a technical condition for the main theorem, but it is satisfied by typical accelerating (or decelerating) time rescalings such as $t^{p}$ ($p > 0$),
$\exp(t)$, and $\log(t)$.
Let us denote time step widths by $h_k$, and the corresponding time grids by $t_k := \sum^{k}_{i = 1} h_i$ ($k=1, 2,\ldots$). We denote the numerical solution by $y^{(k)}$ ($k=0, 1,\ldots$).

\begin{thm}\label{thm4}
      Suppose we employ a numerical method whose stability domain is bounded and static (i.e., it does not change with time). Suppose also that, for any  $\tilde{g}\in[g]$ chosen, we control  time step widths so that all the eigenvalues of $h_k \pdv{\tilde{g}}{y}\,\!(y^{(k-1)},t_{k-1}) \, (k=1,2,\dots)$ lie in the stability domain.
      Then for each element of $[g]$ with monotonic $\dot{\alpha}$, there exists $T \in \RR_{\ge 0}$ and an associated discrete time grid $t_{k_0}\ge T$ such that
       \begin{equation}
              \alpha(t_{k_0+k}) - \alpha(t_{k_0}) = \Order{k}
       \end{equation}
       holds.
\end{thm}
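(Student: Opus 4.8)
The plan is to compress all the hypotheses into a single inequality on the step sizes, and then estimate $\alpha(t_{k_0+k})-\alpha(t_{k_0})$ by a telescoping sum controlled by that inequality. First I would make the $\alpha$-dependence of the rescaled field explicit: from $\tilde{g}(y,t)=\dot{\alpha}(t)\,g_0(y,\alpha(t))$ we obtain $\pdv{\tilde{g}}{y}(y,t)=\dot{\alpha}(t)\,\pdv{g_0}{y}(y,\alpha(t))$, so $\rho(\pdv{\tilde{g}}{y})=\dot{\alpha}(t)\,\rho(\pdv{g_0}{y})=\Theta(\dot{\alpha}(t))$, the last equality because $g_0$ is a proper representative. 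Since the stability domain is bounded and static, it is contained in a single set $\setI{z\in\CC}{\abs{z}\le M}$, so the hypothesis that every eigenvalue of $h_k\,\pdv{\tilde{g}}{y}(y^{(k-1)},t_{k-1})$ lies in it forces $h_k\,\rho(\pdv{\tilde{g}}{y}(\cdot,t_{k-1}))\le M$. Fixing $T$ (hence $k_0$ with $t_{k_0}\ge T$) large enough that the $\Theta(1)$ bounds on $\rho(\pdv{g_0}{y})$ hold for $t\ge T$, this yields the key inequality $h_k\,\dot{\alpha}(t_{k-1})\le C$ for a constant $C>0$ and all $k>k_0$. This is the precise sense in which acceleration (large $\dot{\alpha}$) is penalized by small steps.

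Next I would write the target as a telescoping sum of increments,
\[ \alpha(t_{k_0+k})-\alpha(t_{k_0})=\sum_{j=k_0+1}^{k_0+k}\paren*{\alpha(t_j)-\alpha(t_{j-1})}=\sum_{j=k_0+1}^{k_0+k}\int_{t_{j-1}}^{t_j}\dot{\alpha}(s)\,\rd s, \]
so it suffices to bound each increment $I_j:=\int_{t_{j-1}}^{t_j}\dot{\alpha}$ by a constant. This is where the monotonicity of $\dot{\alpha}$ enters. If $\dot{\alpha}$ is nonincreasing on $[T,\infty)$, then $\dot{\alpha}(s)\le\dot{\alpha}(t_{j-1})$ on the interval, so $I_j\le\dot{\alpha}(t_{j-1})h_j\le C$ directly from the key inequality, and summing gives $\Order{k}$ at once.

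The genuinely harder, and for this theorem the essential, case is $\dot{\alpha}$ nondecreasing, i.e.\ the accelerating rescalings. Then only $I_j\le\dot{\alpha}(t_j)h_j$ is available, whereas the key inequality controls the \emph{left} endpoint $\dot{\alpha}(t_{j-1})h_j$; the obstacle is to show that $\dot{\alpha}$ cannot grow much across a single step, i.e.\ that $r_j:=\dot{\alpha}(t_j)/\dot{\alpha}(t_{j-1})$ stays bounded. I would extract this from the key inequality itself. Using $-(\rd/\rd s)(1/\dot{\alpha})=\ddot{\alpha}/\dot{\alpha}^2$,
\[ \frac1{\dot{\alpha}(t_{j-1})}-\frac1{\dot{\alpha}(t_j)}=\int_{t_{j-1}}^{t_j}\frac{\ddot{\alpha}}{\dot{\alpha}^2}\,\rd s\le K_T\,h_j\le \frac{CK_T}{\dot{\alpha}(t_{j-1})},\qquad K_T:=\sup_{t\ge T}\frac{\ddot{\alpha}(t)}{\dot{\alpha}(t)^2}, \]
and multiplying through by $\dot{\alpha}(t_{j-1})$ gives $1-1/r_j\le CK_T$, hence $r_j\le(1-CK_T)^{-1}$ as soon as $CK_T<1$. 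Then $I_j\le\dot{\alpha}(t_j)h_j=r_j\,\dot{\alpha}(t_{j-1})h_j\le Cr_j$ is bounded, and the sum is again $\Order{k}$.

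I expect the crux to be exactly this within-step growth bound in the accelerating case, which hinges on arranging $CK_T<1$. This is available because $K_T$ can be made small by taking $T$ large: for every admissible rescaling ($t^{p}$, $\exp t$, $\log t$, and the like) one has $\ddot{\alpha}/\dot{\alpha}^2\to 0$, and more generally, since $\alpha$ is defined on all of $\RR_{\ge 0}$, $\dot{\alpha}$ cannot blow up in finite time, which precludes a sustained Riccati-type lower bound $\ddot{\alpha}\gtrsim\dot{\alpha}^2$ and keeps $\ddot{\alpha}/\dot{\alpha}^2$ small for large $t$. (If instead $\dot{\alpha}$ increases to a finite limit, $r_j$ is bounded outright and this step is unnecessary.) I would therefore choose $T$ to satisfy simultaneously the $\Theta(1)$ spectral bound and $CK_T<1$; with $r_j$, and hence every increment $I_j$, bounded, the telescoping estimate yields $\alpha(t_{k_0+k})-\alpha(t_{k_0})=\Order{k}$ in all cases.
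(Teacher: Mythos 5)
Your first half---extracting the key inequality $h_k\dot{\alpha}(t_{k-1})\le C$ from the bounded stability domain and the proper-representative property, telescoping $\alpha(t_{k_0+k})-\alpha(t_{k_0})$ into per-step integrals, and closing the nonincreasing case immediately---is exactly the paper's argument. The gap is in the accelerating case. Your entire plan there is to show that the per-step ratio $r_j=\dot{\alpha}(t_j)/\dot{\alpha}(t_{j-1})$ is bounded, but this is not a consequence of the hypotheses, and your justification conflates a liminf with a sup. The Riccati/no-blow-up argument shows only that $\ddot{\alpha}/\dot{\alpha}^2\ge\delta$ cannot hold on an entire half-line $[T,\infty)$ (else $1/\dot{\alpha}$ would hit zero in finite time); it does not make $K_T=\sup_{t\ge T}\ddot{\alpha}/\dot{\alpha}^2$ small, or even finite. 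Concretely, let $\dot{\alpha}$ be nondecreasing, equal to $a_n$ on most of $[n,n+1]$, and climbing to $a_{n+1}=M_n a_n$ on a terminal sub-interval of width $\epsilon_n$, with $M_n\to\infty$ and $\epsilon_n$ much smaller than $C/a_n$. Then $\alpha$ is a legitimate rescaling (finite everywhere, $\dot{\alpha}$ monotone), but $K_T=\infty$ for every $T$; worse, since the stability constraint pins $h_j$ only through the left endpoint $\dot{\alpha}(t_{j-1})$, a single admissible step of length about $C/a_n$ can straddle the entire ramp, so $r_j\approx M_n$ is genuinely unbounded. No choice of $T$ rescues the condition $CK_T<1$ (note also that $C=r/c$ is fixed by the stability domain and the proper representative, not at your disposal). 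A secondary point: the theorem assumes only that $\alpha$ is differentiable with monotone $\dot{\alpha}$, so $\ddot{\alpha}$ need not exist and the identity $1/\dot{\alpha}(t_{j-1})-1/\dot{\alpha}(t_j)=\int\ddot{\alpha}/\dot{\alpha}^2$ already requires regularity you have not been given.

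The paper's proof is shaped precisely by this obstruction: it does not claim that $E(t_i):=\dot{\alpha}(t_i)/\dot{\alpha}(t_{i-1})$ is bounded. Instead it splits off the subsequence of large ratios $E(t_{i_j})>1+\varepsilon$ and proves the amortized estimate $E(t_{i_j})/(i_{j+1}-i_j)=\Order{1}$. The extra ingredient, which your proposal never uses, is that a numerical method must satisfy $t_k\to\infty$; combined with $h_i\le C/\dot{\alpha}(t_{i-1})$ this forces $\sum_i 1/\dot{\alpha}(t_{i-1})=\infty$, and that divergence is incompatible with ratio jumps that are too large relative to the number of steps separating them (after a jump by a factor $E$ the admissible steps shrink by the same factor, so reaching the next jump costs on the order of $E$ steps). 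Each large increment of $\alpha$ is thus paid for by proportionally many steps, and the total remains $\Order{k}$ even though individual ratios blow up. To repair your write-up you would have to replace the pointwise bound on $r_j$ by some such global accounting; a per-step estimate alone cannot work.
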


Before going to the proof, we mention the meaning of Theorem~\ref{thm4}. Notice that $t_{k_0+k} - t_{k_0}$ is the (discrete) elapsed time in the time scale of $\tilde{g}$, while $\alpha(t_{k_0+k}) - \alpha(t_{k_0})$ denotes the one in the scale of $g_0$. The claim that the latter being $\Order{k}$ implies that however fast the rate might seem in the ``(hopefully) accelerated'' ODE $\dot{y} = \tilde{g}(y,t)$, if we measure the elapsed time on $g_0$'s time scale during solving $\tilde{g}$ ODE for $k$ steps, it is actually nothing more than the time during $k$-step integration of $g_0$ ODE with a fixed time step width.
Thus the convergence rate of discretized $\tilde{g}$ cannot be faster than the discretized $g_0$ with a fixed time step width.

\begin{rem}
       $T$ and $k_0$ are introduced for a technical reason, to counter some singular ODEs such as~\eqref{suODE}, where $\rho(\partial g_0/\partial y)$ tends to infinity as $t\to 0$. In such a case, we need to cut off a short interval around the origin and construct a theorem in the remaining region. 
       In other normal cases, we can simply take $T=0$ and $k_0=0$.
\end{rem}
\begin{proof}
       Let us first clarify the restriction on the time step widths. Since $g_0$ is a proper representative, there exist $T > 0$ and $c > 0$ such that $\rho({\partial g_0}/{\partial y}) \ge c$ for any $t > T$. Below we only consider this time region which is enough to discuss an asymptotic convergence rate. Let us take a discrete time $t_{k_0}>T$ and fix it throughout this proof. From the assumption on the numerical method, there is a constant $r>0$ coming from the size and shape of the stability domain, and the time step size $h_k$ should satisfy
       $\abs{ h_k \dot{\alpha}(t_{k-1}) \rho ( {\partial g_0}/{\partial y} )} \le r$, 
       i.e.,
       \begin{equation}
              h_k \dot{\alpha}(t_{k-1}) \le \frac{r}{c} \quad (k=k_0+1, k_0+2, \ldots). \label{b}
       \end{equation}

       With this observation, a rough sketch of the proof is immediate:
       \begin{align}
              & \alpha(t_{k_0+k}) - \alpha(t_{k_0})
               = \sum_{i = k_0 + 1}^{k_0+ k} \int_{t_{i-1}}^{t_i} \dot{\alpha}(t) \dd t \\
              & \simeq  \sum_{i = k_0 + 1}^{k_0+k} h_i \dot{\alpha}(t_{i-1})
              \le  \frac{r}{c} k 
              =  \Order{k}. \label{rough}
       \end{align}
      When $\dot{\alpha}$ is weakly monotonically decreasing, $\simeq$ can be replaced with $\le$ by~\eqref{b}, and the proof is complete.
      Thus we just need to prove the other case. 

       When $\dot{\alpha}$ is weakly monotonically increasing, we have instead
       \begin{align}
              \alpha(t_{k_0+k}) - \alpha(t_{k_0})
              = \sum_{i = k_0+1}^{k_0+k} \int_{t_{i-1}}^{t_i} \dot{\alpha}(t)\dd t \\
              \le  \sum_{i =k_0+ 1}^{k_0+k} h_i \dot{\alpha}(t_{i})
              \le  \sum_{i = k_0+1}^{k_0+k} \frac{r}{c} \frac{\dot{\alpha}(t_i)}{\dot{\alpha}(t_{i-1})}. \label{inc}
       \end{align}
       From this, we see that if $E(t_i):=\dot{\alpha}(t_i)/\dot{\alpha}(t_{i-1})$ $(i=k_0+1, k_0+2,\ldots)$ is bounded the $\Order{k}$ claim is obvious. Otherwise $\{E(t_i)\}$ includes an unbounded subsequence; to counter such cases, let us consider the subsequence extracting ``large'' elements:
       \begin{align}
              \{E(t_{i_j})\}_{j=1}^{\infty} = \{ E(t_i)\, | \, E(t_i) > 1 + \varepsilon)\}_{i=k_0+1}^{\infty},
       \end{align}
       where $\varepsilon > 0$ is an arbitrary fixed constant. For convenience, we set $E(t_{i_0}) = 1$ and $t_{i_0} = t_{k_0}$.
       Let us here also introduce $J_k$ as the largest index $j$ such that $i_{j} \le k_0 + k$.
       With these notation, if we admit an estimate:
       \begin{equation}
              \frac{E(t_{i_{j}})}{i_{j+1} - i_j} 
              = \Order{1}, \label{E:estimate}
       \end{equation}
       the proof would complete as follows.
       From~\eqref{inc} we see
       \begin{align}
              &\alpha(t_k) - \alpha(t_0)\\
              \le& \Order{k} + \sum_{j=1}^{J_k} \frac{r}c E(t_{i_j}) \\
              \le& \frac{r}c \sum_{j=1}^{J_k} (i_{j+1} - i_j) \left(\frac{E_{t_{i_j}}}{i_{j+1} - i_j}\right) + \Order{k} \\
              \le& \frac{r}c \sum_{j=1}^{J_k} (i_{j+1} - i_j) \Order{1} + \Order{k} 
              = \Order{k},
       \end{align}
       where in the first inequality we split the sum in the last term of~\eqref{inc} in the ``large'' elements and the rest, and the $\Order{k}$ term comes from the latter. 

       Now let us show~\eqref{E:estimate}.
       Since $\dot{\alpha}$ is weakly monotonically increasing, and since $E(t_i) \ge 1$ for $i \ge k_0$ and $E(t_{i_j}) > 1 + \varepsilon$ for $j \ge 1$,
       \begin{align}
              &\sum_{i=k_0+1}^{k_0+k} \frac{1}{\dot{\alpha}(t_{i-1})}\\
              \le& \sum_{j = 0}^{J_k} \frac{1}{\dot{\alpha}(t_{i_j})} (i_{j+1} - i_j)\\
              =& \sum_{j = 0}^{J_k} \left[ \frac{1}{\dot{\alpha}(t_{k_0})} \paren*{\prod_{l=k_0+1}^{i_j}\frac{1}{E(t_l)}} (i_{j+1} - i_j) \right]\\
              \le& \sum_{j = 0}^{J_k} \left[ \frac{1}{\dot{\alpha}(t_{k_0})} \paren*{\prod_{j'=1}^{j}\frac{1}{E(t_{i_{j'}})}} (i_{j+1} - i_j) \right]\\
              <& \sum_{j = 0}^{J_k} \frac{1}{\dot{\alpha}(t_{k_0})} \frac{1}{(1+\varepsilon)^{j-1}}\frac{1}{E(t_{i_{j}})} (i_{j+1} - i_j). \label{J}
       \end{align}
       Here if we take the limit of $k\to\infty$, the most left hand side should tend to $\infty$, since
       \begin{equation}
              \lim_{k \to \infty} (t_{k_0+k} - t_{k_0}) 
              = \lim_{k \to \infty}\sum_{i=k_0+1}^{k_0+k} h_i 
              \le \lim_{k \to \infty} \frac{r}{c} \sum_{i=k_0+1}^{k_0+k} \frac{1}{\dot{\alpha}(t_{i-1})}.
       \end{equation}
       If we demand $t_k\to\infty$ as $k\to\infty$ (which is necessary for a numerical method to make sense; recall $\alpha$ is a time rescaling function), $\sum 1/\dot{\alpha}$ should be so as well.
       Thus,
       \begin{equation}
              \sum_{j = 1}^{\infty} \frac{1}{(1+\varepsilon)^{j-1}}\frac{1}{E(t_{i_{j}})} (i_{j+1} - i_j) = \infty.
       \end{equation}
       Since $\sum_{j=1}^{\infty} 1/j^2 < \infty$,
       \begin{equation}
              \frac{1}{(1+\varepsilon)^{j-1}}\frac{1}{E(t_{i_{j}})} (i_{j+1} - i_j) = \Omega\left( \frac{1}{j^2} \right).
       \end{equation}
       From this the desired estimate~\eqref{E:estimate} is immediate.
\end{proof}

\section{Illustrating examples}
In this section, we show some examples of Section~\ref{sec3} and derive essential convergence rates.
Wibisono et al.~\cite{Wibisono} showed that for a continuously differentiable convex function $f$ and for any differentiable monotonically increasing function $\eta : \RR \to \RR$, the solution $(x,z)$ of 
\begin{equation}
       \left\{ 
       \begin{aligned}
              \dot{x} &= \frac{\gamma(t)}{\re^{\eta(t)}} (z - x),\\
              \dot{z} &= -\gamma(t) \nabla f(x),
       \end{aligned}
       \right. \label{Wibisono}
\end{equation}
where $\gamma(t) := (\dd / \dd t)\re^{\eta(t)}$, satisfies 
\begin{equation}
       f(x(t)) - f^\star = \Order{\re^{-\eta(t)}}. \label{Wibirate}
\end{equation}
Here, the case of $\re^{\eta(t)} = t^2 / 4$ corresponds to \eqref{suODE}.
Since $\eta(t)$ is arbitrary, the rate can be arbitrarily fast.

ODE~\eqref{Wibisono} can be understood in the following way.
First we notice that the solution $(X,Z)$ of
\begin{equation}
       \left\{ 
       \begin{aligned}
              \dot{X} &= \frac{1}{\tau} (Z - X),\\
              \dot{Z} &= -\nabla f(X)
       \end{aligned}
       \right.
\end{equation}
satisfies 
\begin{equation}
       f(X(\tau)) - f^\star = \Order{1/\tau}
\end{equation}
Then the time rescaling by $\tau = \re^{\eta(t)}$ yields $(x,z)= (X(\re^{\eta(t)}), Z(\re^{\eta(t)}))$, which is the solution of ODE~\eqref{Wibisono} satisfying the convergence rate~\eqref{Wibirate}.

We now attempt to apply Theorem~\ref{thm4} to ODE~\eqref{Wibisono} and consider the essential convergence rate of this dynamical system.
For simplicity, we consider the case of $\re^{\eta(t)} = t^p\,(p>0)$, by which ODE~\eqref{Wibisono} reads
\begin{equation}
       \left\{ 
       \begin{aligned}
              \dot{x} &= \frac{p}{t} (z - x),\\
              \dot{z} &= -pt^{p-1} \nabla f(x).
       \end{aligned}
       \right.       \label{testode}
\end{equation}
In this case the objective function decreases at rate $\Order{1/t^p}$ by the solution $x$ of the above system.
By linearizing $\nabla f(x)$, which we denote by $a x$, 
the eigenvalues of the Jacobian of \eqref{testode}'s right-hand side are asymptotically
\begin{equation}
       \lambda \approx \pm \sqrt{a} p t^{\frac{p}{2}-1} \ri,
\end{equation}
where $\ri$ is the imaginary unit.
When discretizing this system, we have to choose the step size $h_k$ so that $h_k \lambda$ lies in the bounded stability domain. Thus $\abs{h_k \lambda} < \text{(const.)}$ holds, which implies $h_k = \Order{t^{1-\frac{p}{2}}}$.
When $p = 2$, the ODE is a proper representative, where $h_k$ can be taken to a constant size $h$.
After $k$ steps the elapsed time of the system~\eqref{testode} is $t = kh$ and therefore the objective function can decrease at rate $\Order{1/k^2}$.
If $p > 2$, however, $h_k$ must be taken gradually smaller and thus $k$ steps do not simply mean that the integration amounts to some time proportional to $k$.
Hence the convergence rate $\Order{1/k^p}$ cannot be achieved despite the rate $\Order{1/t^p}$ in continuous time.
By Theorem~\ref{thm4}, the discrete-time rate cannot exceed the essential convergence rate $\Order{1/k^2}$.
Note that even if we happen to once choose a ``slow'' scale $p=1$, the essential rate $\Order{1/k^2}$ can be recovered by taking $h_k = \Theta(k)$. In this case, Theorem~\ref{thm4} states that the recovery cannot exceed the essential rate.

Next, we show a case where the essential convergence rate cannot be recovered from some time scales.
By fixing $f$, we can discuss the convergence rate in more detail.
Setting $f(x) = x^4 / 4$ and let us derive the proper representative of the gradient flow (whose rate is $\Order{1/t}$ for differentiable convex functions):
\begin{equation}
       \dot{x} = -\nabla f(x) = -x^3, \quad x(0) = 1. \label{x3}
\end{equation}
The solution is written as
\begin{equation}
       x(t) = \frac{1}{\sqrt{2t + 1}}
\end{equation}
and thus the convergence rate is $\Theta(1/t^2)$.
However, the Jacobian of \eqref{x3}'s right-hand side is
\begin{equation}
       \pdv{x}(-x^3) = \frac{-3}{2t + 1}, \label{nlin}
\end{equation}
which implies ODE~\eqref{x3} is not a proper representative.
The proper representative is as follows:
\begin{equation}
       \dot{x} = -\re^t x^3,
\end{equation}
since the Jacobian of \eqref{x3}'s right-hand side is
\begin{equation}
       \pdv{x}(-\re^t x^3) = \frac{-3 \re^t}{2(\re^t - 1) + 1} = \Theta(1),
\end{equation}
and the essential convergence rate is $\Theta(1/\re^{2t})$.
Here we can see that it is impossible to recover the essential convergence rate by discretizing ODE~\eqref{x3} as long as step sizes respect the stability domain; \eqref{nlin} implies that the increase in the step sizes without violating the stability domain is up to a linear scale, though the exponential growth is necessary to restore the essential convergence rate. By actual computation, we observe that in fact such aggressive growth is allowed numerically. This phenomenon is because \eqref{x3} is a purely nonlinear ODE, while the step-size restriction is based on linear stability analysis. Note that this failure does not contradict to Theorem~\ref{thm4}; it does not claim $\alpha(t_k) - \alpha(t_0) = \Theta(k)$ but $\Order{k}$.


Theorem~\ref{thm4} cast a strong restriction on accelerations by time rescaling, but there may remain a loophole. We have considered fixed numerical schemes in this letter, but if the scheme changes during time evolution, especially if the stability domain expands, it is possible to exceed the limit of this theorem at least formally. This is an interesting topic, and worth further investigation.

\end{document}